\theoremstyle{plain}
\newtheorem{theorem}{Theorem}[section]
\newtheorem{corollary}[theorem]{Corollary}
\theoremstyle{definition}
\theoremstyle{remark}
\newtheorem{remark}[theorem]{Remark}
\def\bin #1#2 {\left( \matrix { #1 \cr #2 \cr } \right) }
\begin{document}

\title[Nilpotent cone and bivariant theory]
{Nilpotent cone and bivariant theory}

\author{Vincenzo Di Gennaro }
\address{Universit\`a di Roma \lq\lq Tor Vergata\rq\rq, Dipartimento di Matematica,
Via della Ricerca Scientifica, 00133 Roma, Italy.}
\email{digennar@axp.mat.uniroma2.it}

\author{Davide Franco }
\address{Universit\`a di Napoli
\lq\lq Federico II\rq\rq, Dipartimento di Matematica e Applicazioni
\lq\lq R. Caccioppoli\rq\rq, Via Cintia, 80126 Napoli, Italy.}
\email{davide.franco@unina.it}

\author{Carmine Sessa }
\address{Universit\`a di Napoli
\lq\lq Federico II\rq\rq, Dipartimento di Matematica e Applicazioni
\lq\lq R. Caccioppoli\rq\rq, Via Cintia, 80126 Napoli, Italy.}
\email{carmine.sessa2@unina.it}

\abstract We exhibit a new proof, relying on bivariant theory,
that the nilpotent cone is rationally smooth. Our approach enables
us to prove a slightly more general statement.

\bigskip\noindent {\it{Keywords}}: Nilpotent cone, decomposition Theorem, Springer theory, Weyl group,
bivariant theory, Gysin homomorphism, homology manifold, resolution of singularities, Grothendieck simultaneous resolution.

\medskip\noindent {\it{MSC2010}}\,: Primary 14L30; Secondary 14B05, 14E15, 20C30, 58K15.

\endabstract
\maketitle

\section{Introduction}

In \cite{BM} Borho and MacPherson proved that the nilpotent cone
is a rational homology manifold. The proof relies on the celebrated
Decomposition Theorem by Beilinson, Bernstein, Deligne and Gabber
\cite{BBD} and on the Springer's theory of Weyl group representations
(see \cite{BM} and the references therein).

The aim of this paper is to present a new proof, in our opinion
conceptually very simple, based on the bivariant  theory founded by
Fulton and MacPherson in \cite{FultonCF}. Actually, our approach
enables us to prove a slightly more general  statement (see Remark
\ref{r2} below). By {\it bivariant theory} we intend  {\it the
topological bivariant homology theory with coefficients in a
Noetherian  commutative ring with identity $\mathbb A$}  \cite[pp.
32, 83 and p. 86, Corollary 7.3.4]{FultonCF}.

That the nilpotent cone is a rational homology manifold can be seen
as an easy consequence of a characterization of homology manifolds
we recently proved in \cite[Theorem 6.1]{DFS}: {\it given a
resolution of singularities $\pi:\mathcal{\widetilde N}\to
\mathcal{N}$ of a quasi-projective variety $\mathcal{N}$, then
$\mathcal{N}$ is a homology manifold if and only if there exists a
bivariant class of degree one for $\pi$}. A {\it bivariant class of
degree one for $\pi$} is an element $\eta\in H^0(\mathcal{\widetilde
N}\stackrel{\pi}\to \mathcal{N})$ such that the induced Gysin
homomorphism $\eta_0:H^0(\mathcal{\widetilde N})\to
H^0(\mathcal{N})$ sends $1_{\mathcal{\widetilde N}}$ to
$1_{\mathcal{N}}$.

\section{The main result}

\begin{theorem}\label{main}
Let $\pi':{\mathbf {\widetilde g}}\to {\mathbf {g}}$ be a projective
morphism between complex quasi-projective nonsingular varieties of
the same dimension. Assume that $\pi'$ is generically finite, of
degree $\delta$. Let $\mathcal N\subset {\mathbf {g}}$ be a closed
irreducible subvariety. Consider the induced  fibre square diagram:
 \[
\xymatrix{
\mathcal{\widetilde N}  \ar[d]^{\pi} \ar@{^{(}->}[r]& {\mathbf {\widetilde g}} \ar[d]^{\pi'}\\
\mathcal N \ar@{^{(}->}[r]^i &{\mathbf {g}},
}
\]
where $\mathcal{\widetilde N}:=\mathcal N\times_{\mathbf {g}}{\mathbf {\widetilde g}}$. If 
$\mathcal{\widetilde N}$ is irreducible and nonsingular and $\pi$ is birational,
then $\mathcal{N}$ is an $\mathbb A$-homology manifold for every
Noetherian commutative ring with identity $\mathbb A$ for which $\delta$ is a unit.
\end{theorem}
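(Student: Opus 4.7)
The plan is to reduce the statement to the characterization of homology manifolds recalled in the introduction (\cite[Theorem 6.1]{DFS}): it suffices to exhibit a bivariant class of degree one for the morphism $\pi$. Note first that $\pi$ is itself a resolution of singularities, namely it is the base change of the projective map $\pi'$, hence projective; it is birational by assumption; and its source $\mathcal{\widetilde N}$ is nonsingular and irreducible. The theorem of \cite{DFS} therefore applies as soon as such a bivariant class is produced.

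The natural idea is to build the required class upstairs and then pull it back. Since $\pi'$ is a proper morphism between smooth equidimensional complex varieties, it is a local complete intersection morphism, and the Fulton-MacPherson bivariant theory attaches to it a canonical orientation class $\theta_{\pi'}\in H^{0}({\mathbf {\widetilde g}}\xrightarrow{\pi'}{\mathbf {g}})$. Because $\pi'$ is generically finite of degree $\delta$, the associated Gysin homomorphism should satisfy $\theta_{\pi',\,0}(1_{{\mathbf {\widetilde g}}})=\delta\cdot 1_{{\mathbf {g}}}$; this is essentially the identity $\pi'_{*}[{\mathbf {\widetilde g}}]=\delta\,[{\mathbf {g}}]$ in Borel-Moore homology, read through Poincar\'e duality.

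Next I would invoke the compatibility of bivariant classes with base change: pulling $\theta_{\pi'}$ back along $i$ produces a class $\eta\in H^{0}(\mathcal{\widetilde N}\xrightarrow{\pi}\mathcal{N})$ whose Gysin homomorphism sends $1_{\mathcal{\widetilde N}}$ to $\delta\cdot 1_{\mathcal{N}}$. Since $\delta$ is by hypothesis a unit in $\mathbb{A}$, the rescaled class $\delta^{-1}\eta$ is a bivariant class of degree one for $\pi$, and \cite[Theorem 6.1]{DFS} closes the argument.

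The main technical checkpoint is the identification of the canonical orientation of the l.c.i.\ morphism $\pi'$ within the Fulton-MacPherson framework, together with the verification that its Gysin homomorphism on the unit class computes multiplication by the mapping degree. Everything else is essentially formal: the pullback step relies only on the functoriality of bivariant theory under fibre squares, while the final rescaling is precisely what the invertibility hypothesis on $\delta$ makes possible.
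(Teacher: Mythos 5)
Your proposal is correct and follows essentially the same route as the paper: take the Fulton--MacPherson orientation class of the l.c.i.\ morphism $\pi'$, note that its Gysin map sends $1_{\mathbf{\widetilde g}}$ to $\delta\cdot 1_{\mathbf{g}}$, pull back along the fibre square to get $\theta=i^*\theta'$, rescale by $\delta^{-1}$ to obtain a bivariant class of degree one for $\pi$, and conclude via \cite[Theorem 6.1]{DFS}. The only difference is cosmetic: the paper goes on to unfold the \cite{DFS} argument (the splitting $R\pi_*\mathbb A_{\mathcal{\widetilde N}}\cong\mathbb A_{\mathcal N}\oplus\mathcal K$ and the induced self-duality of $\mathbb A_{\mathcal N}[\nu]$) for the reader's convenience, whereas you treat it as a black box, which is entirely reasonable.
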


\begin{proof}

Since $\pi':{\mathbf {\widetilde g}}\to {\mathbf {g}}$ is a
projective morphism between complex quasi-projective nonsingular
varieties of the same dimension, it is a local complete intersection
morphism of relative codimension $0$ \cite[p. 130]{FultonCF}. Let
$$\theta' \in H^0({\mathbf {\widetilde g}}\stackrel{\pi'}\to {\mathbf
{g}})\cong Hom_{D^{b}_{c}({\mathbf {g}})}(R{\pi'}_*\mathbb
A_{{\mathbf {\widetilde g}}}, \mathbb A_{{\mathbf {g}}})$$ be the
orientation class of $\pi'$ \cite[p. 131]{FultonCF}. Let
$\theta'_0:H^0({\mathbf {\widetilde g}})\to H^0({\mathbf {g}})$ be
the induced Gysin map. It is clear that $\theta'_0(1_{\mathbf
{\widetilde g}})=\delta\cdot 1_{\mathbf {g}}\in H^0({\mathbf {g}})$,
where $\delta$ is the degree of $\pi'$. Therefore, if we denote by
$$\theta:=i^*\theta'\in H^0(\mathcal{\widetilde N}\stackrel{\pi}\to \mathcal{N})\cong
Hom_{D^{b}_{c}(\mathcal N)}(R{\pi}_*\mathbb A_{\mathcal {\widetilde
N}}, \mathbb A_{\mathcal N})$$ the pull-back of $\theta'$, then
$\delta^{-1}\cdot \theta$ is a bivariant class of degree one for
$\pi$ \cite[2. Notations, $(ii)$]{DFS}. At this point, our claim
follows by \cite[Theorem 6.1]{DFS}. \textit{For the Reader's convenience, let us briefly summarize the argument}.

\medskip
Since $\delta^{-1}\cdot \theta$ is a bivariant class of degree one
for $\pi$, it follows that $\left(\delta^{-1}\cdot
\theta\right)\circ \pi^*={\text{id}}_{\mathbb A_\mathcal N}$ in
${D^{b}_{c}(\mathcal N)}$, i.e. that $\delta^{-1}\cdot \theta$ is a
section of the pull-back $\pi^*:\mathbb A_{\mathcal{N}}\to
R\pi_*\mathbb A_{\mathcal{\widetilde N}}$ \cite[Remark 2.1,
$(i)$]{DFS}. Hence, $\mathbb A_{\mathcal{N}}$ is a direct summand of
$Rf_*\mathbb A_{\mathcal{\widetilde N}} $ in ${D^{b}_{c}(\mathcal
N)}$ \cite[Lemma 3.2]{DFS} and so we have a decomposition
\begin{equation}\label{first}
Rf_*\mathbb A_{\mathcal{\widetilde N}} \cong \mathbb A_{\mathcal{
N}} \oplus \mathcal K.
\end{equation}
Now, set $\nu=\dim {\mathcal{\widetilde
N}}=\dim{\mathcal{N}}$ and let $[{\mathcal{\widetilde N}}]\in
H_{2\nu}({\mathcal{\widetilde N}})$ be the fundamental class of
${\mathcal{\widetilde N}}$. We have:
$$
[{\mathcal{\widetilde N}}]\in H_{2\nu}({\mathcal{\widetilde
N}})\cong H^{-2\nu}({\mathcal{\widetilde N}}\stackrel{}\to pt.)\cong
Hom_{D^{b}_{c}({\mathcal{\widetilde N}})}(\mathbb
A_{\mathcal{\widetilde N}}[\nu],D\left(\mathbb
A_{\mathcal{\widetilde N}}[\nu]\right)),
$$
where $D$ denotes Verdier dual. Therefore, $[{\mathcal{\widetilde
N}}]$ corresponds to a morphism
\begin{equation}\label{is}
\mathbb A_{\mathcal{\widetilde N}}[\nu]\to D\left(\mathbb
A_{\mathcal{\widetilde N}}[\nu]\right),
\end{equation}
whose induced map in hypercohomology is nothing but the duality
morphism
\begin{equation}\label{dm}
\mathcal D_{\mathcal{\widetilde N}}: x\in
H^{\bullet}({\mathcal{\widetilde N}})\to x\cap [{\mathcal{\widetilde
N}}]\in H_{2\nu-\bullet}({\mathcal{\widetilde N}}).
\end{equation}
If we assume that ${\mathcal{\widetilde N}}$ is nonsingular (actually it suffices
that ${\mathcal{\widetilde N}}$ is an $\mathbb
A$-homology manifold, the morphisms (\ref{is}) and (\ref{dm}) are isomorphisms.
The first one induces an isomorphism
\begin{equation*}
R\pi_*\mathbb A_{\mathcal{\widetilde N}}[\nu]\to D\left(R\pi_*\mathbb A_{\mathcal{\widetilde N}}[\nu]\right),
\end{equation*}
which in turn, via decomposition (\ref{first}),
induces two projections
\begin{equation}\label{proiez}
\mathbb A_{\mathcal{N}}[\nu] \to D\left(\mathbb
A_{\mathcal{N}}[\nu]\right), \quad  \mathcal K[\nu]\to D\left(
\mathcal K[\nu]\right).
\end{equation}
Making explicit the isomorphism induced in cohomology and homology
by (\ref{first}), one may prove \cite[Corollary 5.1]{DFS} that $\mathcal
D_{\mathcal{\widetilde N}}$ is the direct sum of $P_1$ and $P_2$,
where
$$P_1:
H^{\bullet}({\mathcal{N}})\to H_{2\nu-\bullet}({\mathcal{N}}) \quad
{\text{and}}\quad P_2:\mathbb H(\mathcal K[\nu])\to \mathbb
H(D\left( \mathcal K[\nu]\right))
$$
are the maps induced in hypercohomology by the projections
(\ref{proiez}). It follows that $P_1$ is
an isomorphism, because so is $\mathcal D_{\mathcal{\widetilde N}}$, and this holds true when restricting to every open subset 
$U$ of $\mathcal{ N}$. For instance (see also \cite[Corollary 5.1]{DFS}), if $\widetilde U = \pi^{-1}(U)$, the vanishing of the morphism $\mathbb H^{\bullet}(\mathcal K_U[\nu]) \to H_{2\nu-\bullet}(U)$ derives from projection formula \cite[p. 26, G4, (ii)]{FultonCF}:
$$
\pi_*([\widetilde U]\cap \lambda_*  w)=\pi_*(\delta^{-1}\theta^*[U]\cap \lambda_*
 w)=\delta^{-1}(\theta_*\lambda_* w)\cap [U]=0, \quad \forall w \in \mathbb H^{\bullet}(\mathcal K_U[\nu]),
$$
where $\lambda_*$ is the morphism induced in hypercohomology by $\mathcal K_U[\nu]\to R\pi_*\mathbb A_{\widetilde U}[\nu]$.

Therefore, we have $\mathbb A_{\mathcal{N}}[\nu] \cong
D\left(\mathbb A_{\mathcal{N}}[\nu]\right)$, which is equivalent to
say that $\mathcal N$ is an $\mathbb A$-homology manifold.
\end{proof}


\begin{remark}
Observe that, as a scheme, $\mathcal{\widetilde N}$ could also be
nonreduced, but what matters is that, for the usual topology,  it is
a nonsingular variety \cite[p. 32, 3.1.1]{FultonCF}
\end{remark}

\begin{corollary}
The nilpotent cone is a rational homology manifold.
\end{corollary}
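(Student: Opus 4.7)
The plan is to deduce the corollary by specializing Theorem~\ref{main} to the Grothendieck--Springer picture. First I would set up the Grothendieck simultaneous resolution: let $G$ be a connected semisimple complex algebraic group with Lie algebra $\mathbf{g}$, fix a Borel subgroup $B\subset G$ with Lie algebra $\mathbf{b}$, and put
$$\widetilde{\mathbf{g}} \;:=\; \{(x,\mathbf{b}')\,:\,\mathbf{b}'\text{ a Borel subalgebra of }\mathbf{g},\ x\in \mathbf{b}'\}\;\cong\; G\times^{B}\mathbf{b},$$
with projection $\pi':\widetilde{\mathbf{g}}\to\mathbf{g}$, $(x,\mathbf{b}')\mapsto x$. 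As the total space of a vector bundle on the flag variety $G/B$, $\widetilde{\mathbf{g}}$ is a nonsingular quasi-projective variety of dimension $\dim\mathbf{g}$; and $\pi'$ is projective because $G/B$ is. Restricted over the regular semisimple locus, $\pi'$ is an étale Galois cover with group $W$, so $\pi'$ is generically finite of degree $\delta=|W|$.

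Next I would take $\mathcal{N}\subset\mathbf{g}$ to be the nilpotent cone, which is a closed irreducible subvariety. A direct computation of the fibre product identifies
$$\widetilde{\mathcal{N}}\;:=\;\mathcal{N}\times_{\mathbf{g}}\widetilde{\mathbf{g}}\;\cong\;\{(x,\mathbf{b}')\,:\,x\in[\mathbf{b}',\mathbf{b}']\}\;\cong\; G\times^{B}[\mathbf{b},\mathbf{b}],$$
which via the Killing form is nothing but the Springer resolution $T^{*}(G/B)\to\mathcal{N}$. In particular $\widetilde{\mathcal{N}}$ is irreducible and nonsingular, and the induced map $\pi:\widetilde{\mathcal{N}}\to\mathcal{N}$ is birational, being an isomorphism over the (dense) regular nilpotent orbit.

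With these facts in hand, every hypothesis of Theorem~\ref{main} is met, so $\mathcal{N}$ is an $\mathbb{A}$-homology manifold whenever $|W|$ is a unit in $\mathbb{A}$. Since $|W|$ is a nonzero integer, taking $\mathbb{A}=\mathbb{Q}$ finishes the proof. The only subtle point I expect is the scheme structure of $\widetilde{\mathcal{N}}$: a priori the fibre product might be nonreduced, but as pointed out in the remark after Theorem~\ref{main} all that matters is that $\widetilde{\mathcal{N}}$ is nonsingular as a topological space, and this is guaranteed by the classical identification with $T^{*}(G/B)$.
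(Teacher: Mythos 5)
Your argument is correct and is essentially the paper's proof: both deduce the corollary by applying Theorem~\ref{main} to the Grothendieck simultaneous resolution $\pi':\widetilde{\mathbf g}\to\mathbf g$, whose restriction over $\mathcal N$ is the Springer resolution. The paper merely cites Borho--MacPherson for these facts, whereas you spell out the construction, the identification $\widetilde{\mathcal N}\cong T^*(G/B)$, the birationality over the regular nilpotent orbit, and the degree $\delta=|W|$; this added detail is accurate and matches the paper's intent, including the remark about possible nonreducedness of the fibre product.
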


\begin{proof}
Let $\pi: \mathcal{\widetilde N}\to \mathcal{N}$ be the Springer
resolution of the nilpotent cone $\mathcal{N}$. It extends to a
generically finite projective morphism $\pi':{\mathbf {\widetilde g}}\to {\mathbf { g}}$, known as the Grothendieck simultaneous resolution, between complex
quasi-projective nonsingular varieties of the same dimension
 \cite[p. 49]{BM}.
Therefore, Theorem \ref{main} applies.
\end{proof}

\begin{remark}\label{r2}
If the Grothendieck simultaneous resolution $\pi': {\mathbf{\widetilde{g}}}\to {\mathbf { g}}$ has degree $\delta$, by
Theorem \ref{main} we deduce that {\it the nilpotent cone $\mathcal N$ is
an $\mathbb A$-homology manifold for every Noetherian commutative
ring with identity $\mathbb A$ for which $\delta$ is a unit}. For
instance, for the variety $\mathcal N$ of nilpotent matrices in
${\text{GL}}(n,\mathbb C)$, we have $\delta=n!$. Therefore, in this case,
$\mathcal N$ is also a $\mathbb Z_h$-homology manifold for every
integer $h$ relatively prime with $n!$ in $\mathbb Z$.
\end{remark}

\smallskip
{\bf{Statements and Declarations.}}

\medskip
{\bf Funding.} The authors declare that no funds, grants, or other
support were received during the preparation of this manuscript.

\medskip
{\bf Data availability.} Data sharing not applicable to this article
as no datasets were generated or analysed during the current study.

\medskip
{\bf Competing Interests.} The authors have no relevant financial or
non-financial interests to disclose.

\end{document}